\author{Olivier Garet}
\newcommand{\N}{\ensuremath{\mathbb{N}}}
\newcommand{\Z}{\ensuremath{\mathbb{Z}}}
\newcommand{\E}{\ensuremath{\mathbb{E}}}
\newcommand{\Q}{\ensuremath{\mathbb{Q}}}
\renewcommand{\P}{\ensuremath{\mathbb{P}}}
\newcommand{\R}{\ensuremath{\mathbb{R}}}
\newcommand{\Rd}{\ensuremath{\mathbb{R}^d}}
\newcommand{\C}{\ensuremath{\mathbb{C}}}
\newcommand{\1}{1\hspace{-2.7mm}1}
\newcommand{\miniop}[3]{%
\renewcommand{\arraystretch}{0.6}
\begin{array}{c}
{\scriptstyle #1}\\
#2\\
{\scriptstyle #3}
\end{array}
\renewcommand{\arraystretch}{1}}
\renewcommand\Re{\text{Re}\ }
\renewcommand{\epsilon}{\varepsilon}
\newcommand{\floor}[1]{\lfloor #1 \rfloor}
\begin{document}

\newtheorem{theo}{\textsc{Théorème}}
\newtheorem{lem}{\textsc{Lemme}}
\newtheorem{coro}{\textsc{Corollaire}}
\newtheorem{rem}{\textsc{Rem}}
\author{Olivier \textsc{Garet}}
\date{article paru dans Quadrature, \no 96, Avril-mai-juin 2015 }
\title{Les lois Zêta pour l'arithmétique}

\maketitle

\begin{abstract}
On (re)visite ici avec un regard probabiliste  un certain nombre de résultats connus de la théorie analytique des nombres.
Au centre de l'article se trouvent les lois Zêta, qui nous sont une consolation de l'inexistence d'une loi uniforme sur $\N$.
Elles nous permettront par exemple d'étudier la densité naturelle des couples d'entiers ou d'entiers de Gauss premiers entre eux, ainsi que d'autres problèmes analogues. Au passage, on retrouvera la décomposition de la fonction Zêta de Riemann sous forme d'un produit eulérien et une généralisation aux sommes de fonctions multiplicatives.
\end{abstract}
\section{De jolis résultats}

Un résultat élémentaire bien connu de théorie des nombres, dû à Dirichlet, est le suivant: si
je prends, de manière indépendante, deux nombres choisis uniformément entre $1$ et $n$, alors, la probabilité $P_n$ que ces deux nombres soient premiers entre eux vérifie $\miniop{}{\lim}{n\to +\infty} P_n=\frac{6}{\pi^2}$.
Un résultat un peu moins connu analogue est que la probabilité $Q_n$ qu'un nombre choisi uniformément entre $1$ et $n$ soit sans facteur carré (c'est à dire qu'on ne peut le diviser par le carré d'un entier différent de 1) possède le même comportement.
Voici deux résultats d'essence probabiliste, qui pourtant, ne sont pas tout à fait des grands classiques de la littérature universitaire en probabilités. Ils sont en revanche bien connus en théorie analytique des nombres, et dans ce contexte, les preuves proposées ont une présentation peu probabiliste. Dans le texte qui suit, nous allons ramener ces résultats dans un cadre probabiliste, notamment à l'aide des lois Zêta, qui sont des lois très simples dotées de jolies propriétés arithmétiques.
Un avantage de l'approche probabiliste est que l'on n'a pas besoin d'estimées très fines pour montrer la convergence. Dans une partie finale, on pourra donner ainsi une preuve relativement élémentaire du calcul de la densité asymptotique des couples d'entiers de Gauss premiers entre eux. 
\section{Des preuves détournées}

Les preuves les plus couramment trouvées dans la littérature se divisent souvent en deux étapes:
\begin{itemize}
\item Montrer la convergence de $(P_n)_{n\ge 1}$ (ou $(Q_n)_{n\ge 1}$)
\item Identifier la limite.
\end{itemize}
Pour la première étape, une preuve simple  repose sur la formule du crible:
prenons $X$ et $Y$ deux variables aléatoires indépendantes suivant la loi uniforme sur $\{1,\dots,n\}$. On exprime alors la probabilité cherchée comme le complémentaire de ``un nombre premier divise $X$ et $Y$'' et ``le carré d'un nombre premier divise $X$''. Notant $(p_n)_{n\ge 1}$ la suite des nombres premiers, on optient avec la formule du crible

\begin{align*}
P_n&=\frac1{n^2}\miniop{}{\sum}{B\subset\{1,\dots,n\}} (-1)^{|B|} \left({\floor{\frac{n}{{\prod}_{i\in B}p_i}}}\right)^2\\
&=\frac1{n^2} \miniop{}{\sum}{B\subset\{1,\dots,n\}}    \mu({\prod}_{i\in B}p_i)\left({\floor{\frac{n}{{\prod}_{i\in B}p_i}}}\right)^2,
\end{align*}
où $\mu$ est la fonction de Möbius:  $ \mu(n)=0$ si $n$ est divisible par un carré et $\mu(n)=(-1)^i$ si $n$ s'écrit comme produit de $i$ nombres premiers distincts.
En réindexant les termes et en rajoutant de nombreux termes nuls, on a finalement
 \begin{align*}
P_n&=\frac1{n^2}\sum_{k\ge 1} \mu(k) \left({\floor{\frac{n}{k}}}\right)^2,
\end{align*}
et, de manière similaire
\begin{align*}
Q_n&=\frac1{n}\miniop{}{\sum}{B\subset\{1,\dots,n\}} (-1)^{|B|} {\floor{\frac{n}{{\prod}_{i\in B}p_i^2}}}\\&=\frac1{n}\sum_{k\ge 1} \mu(k) {\floor{\frac{n}{k^2}}}.
\end{align*}

En utilisant un argument de convergence dominée, on obtient alors la convergence de ces deux suites vers $\sum_{n=1}^{+\infty}\frac{\mu(n)}{n^2}$.

Pour référence, mentionnons qu'une autre approche existe, consistant à exprimer $(P_n)$ comme une somme de la fonction indicatrice d'Euler, que l'on exprime elle-même à l'aide de la fonction de Möbius\footnote{plus précisément, grâce à l'identité $\phi=\mu*\text{Id}$}. Cette approche, utilisée par exemple par Tissier~\cite{tissier}, est un cas particulier de la méthode de l'hyperbole de Dirichlet. Pour une description générale de cette technique, nous renvoyons le lecteur à Bordellès~\cite{borde} ou Tenenbaum~\cite{tenenbible}.

Dans tous les cas,  le problème d'identifier la somme de cette série demeure.

La manière la plus efficace est d'utiliser un  produit de Dirichlet: on note que
$$\left(\sum_{n=1}^{+\infty}\frac{\mu(n)}{n^2}\right)\left(\sum_{n=1}^{+\infty}\frac{1}{n^2}\right)= \sum_{n=1}^{+\infty}\frac{(\mu*1)_n}{n^2},$$
où $*$ désigne la convolution arithmétique des suites.
$$(a*b)_n=\sum_{d|n} a_d b_{n/d}.$$
Comme $\mu*1=\delta_1$ (c'est la formule d'inversion de Möbius), on obtient l'identification recherchée.~\footnote{
Faraut et Khalili~\cite{faraut} utilisent une technique d'identification assez similaire, basée cette fois sur l'identité $1*1=\tau$, où $\tau$ est la fonction \og nombre de diviseurs\fg.}
Donnons une brève preuve combinatoire de la formule d'inversion de Möbius: soit $k\ge 1$.
\begin{itemize}
\item Si $k=1$, l'identité $(\mu*1)(k)=\mu(1)=1=\delta_1(k)$ est immédiate.
\item Sinon, si les facteurs premiers sont $q_1,\dots,q_m$, les seuls diviseurs $d$ de $k$ qui apportent une  contribution non nulle à $\sum_{d|n}\mu(d)$ s'écrivent comme produit des éléments d'une partie de $\{q_1,\dots,q_m\}$, de sorte que

$$\sum_{d|n}\mu(d)=\sum_{B\subset\{q_1,\dots,q_m\}}\mu(\prod_{a\in B} a)=\sum_{B\subset\{q_1,\dots,q_m\}} (-1)^{|B|}.$$
L'application $B\mapsto \{q_1\}\cup B$ crée clairement une bijection entre les parties de $\{q_1,\dots,q_m\}$ ne contenant pas $q_1$ et celles la contenant. \`A l'évidence, les deux parties ainsi associées apportent des contributions opposées à la somme totale, qui est donc nulle.
\end{itemize}

On notera que ces preuves sont essentiellement combinatoires, et font toutes l'usage d'au moins deux formules de convolution ou d'inversion: en effet, la formule de Poincaré, d'une part, la formule de Möbius, d'autre part, sont toutes deux des formules d'inversion. Mais si le problème se résout à l'aide de deux inversions, peut-être pourrait-on n'en faire aucune ?
On  est donc tenté de se demander si on ne peut pas trouver une preuve plus directe, ou plus probabiliste.

\section{Des indices}

Une manière simple de deviner la limite est de passer par les lois Zêta.

Pour $s>1$, on appelle loi Zêta (ou loi de Zipf) de paramètre $s$ la loi sur $\N^*$ qui assigne la masse $\frac{n^{-s}}{\zeta(s)}$ au point $n$.
Le coefficient de renormalisation $\zeta(s)$ est la célèbre fonction $\zeta$ de Riemann, définie pour $s>1$ par
$$\zeta(s)=\sum_{k=1}^{+\infty}\frac1{k^s}.$$
\begin{lem}
\label{limzeta}
La fonction $\zeta$ a une limite infinie en $1$.
\end{lem}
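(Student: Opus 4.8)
L'idée directrice est que la divergence de $\zeta$ en $1$ n'est qu'une reformulation de la divergence de la série harmonique $\sum 1/k$. Avant toute chose, je remarquerais que $\zeta$ est décroissante sur $]1,+\infty[$ : pour chaque $k\ge 1$, la fonction $s\mapsto k^{-s}$ est décroissante, donc leur somme l'est aussi. La limite de $\zeta$ en $1^+$ existe par conséquent dans $]0,+\infty]$, et il ne reste qu'à écarter le cas d'une limite finie.

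\emph{Première approche (sommes partielles).} Pour un entier $N$ fixé, la somme partielle $s\mapsto \sum_{k=1}^N k^{-s}$ est une somme finie de fonctions continues, donc elle tend vers $\sum_{k=1}^N 1/k$ lorsque $s\to 1^+$. Comme tous les termes de la série définissant $\zeta$ sont positifs, on dispose de la minoration $\zeta(s)\ge \sum_{k=1}^N k^{-s}$ pour tout $s>1$, d'où, en passant à la limite (qui existe grâce à la monotonie), $\lim_{s\to 1^+}\zeta(s)\ge \sum_{k=1}^N 1/k$. Cette inégalité étant valable pour tout $N$ et la série harmonique étant divergente, on conclut en faisant tendre $N$ vers l'infini que la limite vaut $+\infty$. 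Le point à surveiller, et c'est bien la seule subtilité, est de ne pas intervertir étourdiment la limite et la somme infinie : on se contente de minorer $\zeta$ par une somme finie, sur laquelle le passage à la limite est immédiat.

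\emph{Seconde approche (comparaison série-intégrale).} Elle a l'avantage de fournir une minoration quantitative. La fonction $x\mapsto x^{-s}$ étant décroissante, on a $k^{-s}\ge \int_k^{k+1}x^{-s}\,dx$ pour tout $k\ge 1$ ; en sommant sur $k$, on obtient
$$\zeta(s)\ge \int_1^{+\infty}x^{-s}\,dx=\frac{1}{s-1}.$$
Comme $\frac{1}{s-1}\to +\infty$ quand $s\to 1^+$, la conclusion suit. Je privilégierais cette seconde voie, car l'estimation $\zeta(s)\ge 1/(s-1)$ resservira vraisemblablement pour contrôler le comportement des lois Zêta au voisinage de $s=1$.
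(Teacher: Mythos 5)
Votre preuve est correcte dans ses deux variantes. La première est essentiellement celle du texte : on minore $\zeta(s)$ par la somme partielle $\sum_{k=1}^N k^{-s}$ (tous les termes étant positifs), on utilise la continuité de cette somme finie en $s=1$ et la divergence de la série harmonique pour conclure que $\zeta(s)>A$ sur un voisinage à droite de $1$, pour tout $A$. Notez que votre remarque préliminaire sur la décroissance de $\zeta$ n'est pas nécessaire : affirmer que pour tout $A$ il existe $s_0>1$ tel que $\zeta(s)>A$ sur $\mathopen]1,s_0\mathclose[$ est exactement la définition d'une limite infinie, sans qu'il faille établir au préalable l'existence de la limite. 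La seconde variante, par comparaison série--intégrale, est une voie réellement différente de celle du texte ; elle fournit en prime la minoration quantitative $\zeta(s)\ge \frac{1}{s-1}$. Cette précision supplémentaire n'est toutefois pas exploitée dans la suite de l'article, qui n'utilise que la divergence qualitative de $\zeta$ en $1^+$ (notamment pour en déduire la divergence de la série des $\frac1{p_n}$) ; l'argument \emph{mou} de la première variante suffit donc, mais votre estimation est un raffinement classique obtenu sans surcoût.
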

\begin{proof}
 Cela se voit aisément par un argument de monotonie: comme la série harmonique diverge, pour tout $A$ on peut trouver $n$ tel que $\sum_{k=1}^n \frac1{k}>A$. Par continuité de la fonction $s\mapsto \sum_{k=1}^n \frac1{k^s}$, il existe $s_0$ tel que  $\sum_{k=1}^n \frac1{k^s}>A$ pour $s\in \mathopen]1,s_0\mathclose[$. Il est alors clair que $\zeta(s)>A$ pour $s\in \mathopen]1,s_0\mathclose[$.
\end{proof}

Les lois Zêta apparaissent naturellement comme consolatrices de l'inexistence d'une loi uniforme sur $\N^*$. Précisément, on a

\begin{lem}
\label{calculproba}
Soit $s>0$.
Si $X$ est une variable aléatoire sur $\N^*$  telle que tout tout $n\ge 1$, la probabilité de l'événement $n|X$ ($n$ divise $X$) vérifie $\P(n|X)=\frac1{n^s}$, alors pour des nombres $n_1,\dots, n_k$ deux à deux premiers entre eux, les événements $\{n_1|X\}$, \dots,  $\{n_k|X\}$ sont globalement (ou mutuellement) indépendants.
\end{lem}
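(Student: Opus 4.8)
Le plan est de revenir à la définition même de l'indépendance mutuelle et de la combiner avec un fait arithmétique élémentaire. Rappelons que des événements $A_1,\dots,A_k$ sont globalement indépendants si et seulement si, pour toute partie $I\subset\{1,\dots,k\}$, on a $\P\left(\bigcap_{i\in I}A_i\right)=\prod_{i\in I}\P(A_i)$. Il s'agit donc de vérifier cette égalité pour les événements $A_i=\{n_i|X\}$.

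L'observation centrale est la suivante: comme les entiers $n_i$ sont deux à deux premiers entre eux, pour toute partie $I$, l'entier $X$ est divisible par chacun des $n_i$, $i\in I$, si et seulement s'il est divisible par leur produit $\prod_{i\in I}n_i$. En effet, le plus petit commun multiple d'entiers deux à deux premiers entre eux est égal à leur produit. Autrement dit,
$$\bigcap_{i\in I}\{n_i|X\}=\Big\{\Big(\prod_{i\in I}n_i\Big)\,\Big|\,X\Big\}.$$

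Il ne reste alors qu'à appliquer l'hypothèse. En posant $m=\prod_{i\in I}n_i$, on obtient $\P\big(\bigcap_{i\in I}\{n_i|X\}\big)=\P(m|X)=m^{-s}$, et la multiplicativité de $t\mapsto t^{-s}$ donne $m^{-s}=\prod_{i\in I}n_i^{-s}=\prod_{i\in I}\P(n_i|X)$, ce qui est exactement la condition d'indépendance recherchée. L'étape la plus délicate — quoique parfaitement élémentaire — est le fait arithmétique reliant la divisibilité simultanée par des entiers premiers entre eux deux à deux à la divisibilité par leur produit; il découle du théorème fondamental de l'arithmétique, ou de façon équivalente de l'égalité « ppcm $=$ produit » dans ce cas. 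On notera que l'hypothèse ne porte que sur les probabilités marginales $\P(n|X)=n^{-s}$, sans postuler d'indépendance: c'est la structure multiplicative de $n\mapsto n^{-s}$, conjuguée à celle de la divisibilité, qui la fait apparaître gratuitement.
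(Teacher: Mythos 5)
Votre preuve est correcte et suit exactement la même démarche que celle de l'article : pour toute sous-famille, on identifie l'intersection des événements de divisibilité avec l'événement de divisibilité par le produit (grâce à la coprimalité deux à deux), puis on conclut par la multiplicativité de $n\mapsto n^{-s}$. Rien à redire.
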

\begin{proof}
Soient $a_1,\dots ,a_r$ des entiers distincts entre $1$ et $k$.
Comme les entiers $n_{a_i}$ sont premiers entre eux, on a
$$\miniop{r}{\cap}{i=1}  \{n_{a_i}| X\}=\{\miniop{r}{\prod}{i=1} n_{a_i} | X\}.$$
Donc
\begin{align*}\P(\miniop{r}{\cap}{i=1}  \{n_{a_i}| X\})&=\P((\miniop{r}{\prod}{i=1} n_{a_i}) | X)\\&= (\miniop{r}{\prod}{i=1} n_{a_i})^{-s}=  \miniop{r}{\prod}{i=1} n_{a_i}^{-s}\\
&=  \miniop{r}{\prod}{i=1} \P(n_{a_i}|X)
\end{align*}
\end{proof}
Pour $s>1$, considérons $X$ suivant la loi Zêta de paramètre $s$. Pour tout entier naturel $n$ non nul, on a
\begin{align*}
\P(n|X)&=\sum_{k=1}^{+\infty}\P(X=kn)= \zeta(s)^{-1}\sum_{k=1}^{+\infty} (kn)^{-s}\\
&= n^{-s}\zeta(s)^{-1}\sum_{k=1}^{+\infty} k^{-s}=n^{-s}.
\end{align*}
Ainsi, la loi  Zêta de paramètre $s$ satisfait la propriété requise par le lemme, que l'on peut donc appliquer:
\begin{align*}
\frac1{\zeta(s)}&=\P(X=1)=\P(\forall n\ge 1; p_n\nmid X)\\ &=\prod_{n\ge 1} \P(p_n\nmid X)=\prod_{n\ge 1}(1-\frac1{p_n^s}).
\end{align*}
L'égalité $$\P(\forall n\ge 1; p_n\nmid X)=\prod_{n\ge 1} \P(p_n\nmid X)$$ mérite quelques mots: d'abord, on utilise le fait que si une famille finie d'événements est indépendante, alors la famille formée par leurs complémentaires l'est aussi. Pour cela, il suffit de montrer que si on remplace un des éléments de la famille par son complémentaire, la famille alors obtenue est encore indépendante, puis d'itérer.~\footnote{Pour les détails, on pourra si nécessaire se reporter à~\cite{gk}, exercice 18 page 50.} 
Pour conclure, il suffit d'utiliser le théorème de continuité séquentielle décroissante: pour toute suite décroissante d'événements, la probabilité de l'intersection est la limite des probabilités.

Nous venons ainsi de donner une preuve probabiliste d'une formule célèbre de la théorie des nombres, initialement dûe à Euler:
\begin{align}
\label{euler}
\forall s>1\quad \zeta(s)^{-1}&= \prod_{n\ge 1}(1-\frac1{p_n^s}).
\end{align}

En passant au logarithme, on obtient pour tout $s>1$,  $$\log \zeta(s)= \sum_{n\ge 1} -\log(1-\frac1{p_n^s})\le \sum_{n\ge 1} -\log(1-\frac1{p_n}).$$
En faisant tendre $s$ vers $1$, on obtient  la divergence de la série de terme général $-\log(1-\frac1{p_n})$ et, par équivalent, de la série des $\frac1{p_n}$
Ainsi, pour $s\in \mathopen]0,1\mathclose]$, s'il existait une variable aléatoire sur $\N^*$ telle  $\P(n|X)=\frac1{n^s}$ pour tout $n\ge 1$, le deuxième lemme de Borel--Cantelli~\footnote{Le 2e lemme de Borel--Cantelli dit que si des événements indépendants $(A_n)_{n\ge 1}$ vérifient $\sum_{n=1}^{+\infty} \P(A_n)=+\infty$, alors la probabilité que l'on voit simultanément la réalisation d'une infinité des événements $A_n$ vaut 1. Voir par exemple Garet et Kurtzmann~\cite{gk}.} assurerait $$\P(p_n|X\text{ pour une infinité de valeurs de }n)=1,$$ ce qui est absurde, car l'événement en question est vide.
En particulier, il n'existe pas de variable aléatoire sur $\N^*$ telle  $\P(n|X)=\frac1{n}$ pour tout $n\ge 1$.

On restera donc avec $s>1$. 
Soit donc $X$ suivant la loi Zêta de paramètre~$s$. En procédant comme précédemment, on a
\begin{align*}
\P(X\text{ sans facteur carré})&=\P(\forall i\ge 1;\quad p_i^2\ \text{ne divise pas }X)\\&=\prod_{i\ge 1}(1-\frac1{p_i^{2s}})=\frac1{\zeta(2s)},
\end{align*}
où la dernière égalité vient de~\eqref{euler}.

De la même manière, si $X$ et $Y$ sont indépendants et suivent  la loi Zêta de paramètre $s$, la probabilité que $X$ et $Y$ soient premiers entre eux est la probabilité qu'aucun nombre premier ne divise les deux; par indépendance et avec le lemme~\ref{calculproba}, c'est encore
$\prod_{i\ge 1}(1-\frac1{p_i^{2s}})=\frac1{\zeta(2s)}$.

Lorsque $s$ tend vers $1$, ces probabilités tendent vers $\frac1{\zeta(2)}$, qui est précisément la valeur des limites de $P_n$ et $Q_n$.

\begin{rem}
Cela n'est pas surprenant. En théorie des nombres, on dit qu'une partie $E$ de $\N^*$ admet une densité naturelle, et que cette densité naturelle vaut $\ell$ si
$$\miniop{}{\lim}{n\to +\infty}\P_n(E)=\ell,\text{ où }\P_n(E)=\frac{|\{1,\dots,n\}\cap E|}n.$$
On dit  également qu'une partie $E$ de $\N^*$ admet une densité analytique, ou encore une densité de Dirichlet, et que cette densité vaut $\ell$ si 
$$\miniop{}{\lim}{s \to 1^+}\zeta_s(E)=\ell,$$
où  $\zeta_s$ est la loi Zêta de paramètre $s$.
La notion de densité de Dirichlet, ou densité analytique, est très utilisée en théorie analytique et probabiliste des nombres. On peut par exemple se référer à l'ouvrage de Tenenbaum~\cite{tenenbible}.

En particulier, il est bien connu que l'existence d'une densité naturelle implique celle d'une densité de Dirichlet, et que dans ce cas, les deux densités coïncident.

Ainsi, si $(Q_n)$ converge, ce ne peut être que vers $1/\zeta(2)$.~\footnote{L'exercice 2 page 52 de Garet et Kurtzmann~\cite{gk} utilise ce chemin hybride: on commence par montrer l'existence de la limite de $Q_n$ par la méthode combinatoire classique, puis la limite est identifiée en passant par la densité de Dirichlet.}

\end{rem}

Pour l'heure, nous n'avons pas encore de preuve purement probabiliste, qui ne requière pas une certaine familiarité avec la théorie des nombres.
Cependant, on peut noter que la probabilité limite $\frac{6}{\pi^2}$ est précisément la probabilité qu'une variable suivant la loi Zêta de paramètre 2 soit égale à un. D'où l'idée d'obtenir ce théorème de Dirichlet comme corollaire d'une convergence en loi.

\section{Compléments de probabilité} 
\subsection{Convergence en loi}
La notion de convergence en loi est une notion très importante en théorie des probabilités. Le paragraphe qui suit en fait une présentation minimale, dans le cadre des probabilités sur un ensemble dénombrable. Cette présentation doit permettre de rendre l'essentiel  du texte accessible à un probabiliste débutant -- typiquement un étudiant de deuxième année de licence des Universités ou des classes préparatoires scientifiques.

Soit $\Omega$ un ensemble fini ou dénombrable.
On note $\mathcal{M}(\Omega)$ l'ensemble des probabilités sur $\Omega$, c'est-à-dire
des familles $(\mu(x))_{x\in \Omega}$ avec $\mu(x)\ge 0$ pour tout $x\in \Omega$ et
$\sum_{x\in \Omega} \mu(x)=1$. 

Si $\P\in\mathcal{M}(\Omega)$ et $A\subset \Omega$, on peut définir une fonction $\phi_A$ de $\Omega$ dans  $\{-1,1\}$ par $\phi_A(x)=\1_A(x)-\1_{A^c}(x)$ pour tout $x\in\Omega$. On peut alors écrire
\begin{align*}
2\P(A)-1&=\E[2\1_A-1]=\E[\1_A-\1_{A^c}]=\E[\phi_A(X)]\\&=\sum_{x\in  \Omega}\phi_A(x)\P(x).
\end{align*}
 
L'identification des deux extrémités de l'égalité ne nécessite pas la notion d'espérance, mais elle est alors moins naturelle.\\
Ainsi, pour $\P,\Q\in\mathcal{M}(\Omega)$ et $A\subset \Omega$, on a
$$\P(A)-\Q(A)=\frac12 \sum_{x\in  \Omega}\phi_A(x)\left(\P(x)-\Q(x)\right),$$
d'où $|\P(A)-\Q(A)|\le\frac12 \sum_{x\in  \Omega} |\P(x)-\Q(x)|$.

Notons que l'égalité est atteinte pour $A=\{x\in \Omega;\P(x)\ge \Q(x)\}$.
Ainsi, si l'on pose $d(\P,\Q)=\sup_{A\subset \Omega}|\P(A)-\Q(A)|$, on a $$d(\P,\Q)=\frac12 \sum_{x\in \Omega}|\P(x)-\Q(x)|.$$ Il est facile de voir qu'on a ainsi défini une distance entre les probabilités sur $\Omega$. Cette distance est appelée distance de la variation totale.

En prenant pour $A$ un singleton, on a facilement
$$\sup_{x\in \Omega} |\P(x)-\Q(x)|\le d(\P,\Q).$$

Cependant
\begin{align*}
\frac12 |\P(x)-\Q(x)|&=\frac12(\Q(x)-\P(x))\\&\quad +(\P(x)-\min(\P(x),\Q(x))),
\end{align*} d'où en faisant la somme
$$d(\P,\Q)=\sum_{x\in \Omega}(\P(x)-\min(\P(x),\Q(x))).$$
 
On peut maintenant démontrer le théorème suivant: 
\begin{theo}
Soient $(\P_n)_{n\ge 1},\P$ des probabilités sur $\Omega$. On a équivalence entre
\begin{enumerate}
\item[(i)] $d(\P_n,\P)\to 0$
\item[(ii)] pour tout $A\subset \Omega$, $\P_n(A)\to \P(A)$
\item[(ii')] $\sup_{x\in \Omega} |\P_n(x)-\P(x)|\to 0$
\item[(iii)] pour tout $x\in \Omega$, $\P_n(x)\to \P(x)$
\end{enumerate}
\end{theo}
\begin{proof}
Par définition de $d$, $(i)$ entraîne $(ii)$ et $(ii')$ et il est facile de voir que  $(ii)$ ou $(ii')$ entraîne $(iii)$.
Le plus difficile est de montrer que $(iii)$ entraîne $(i)$, mais nous avons fait le travail préparatoire.
Supposons donc que $\P_n(x)$ tend vers $\P(x)$ pour tout $x$. 
Soit $\epsilon>0$. De la convergence de la somme $\sum_{x\in \Omega}\P(x)$, on déduit qu'il existe $S\subset \Omega$ fini tel que $\sum_{x\in \Omega\backslash S}\P(x)\le \epsilon/2$. 
On a
\begin{align*}d(\P,\P_n)&=\sum_{x\in \Omega}(\P(x)-\min(\P(x),\P_n(x)))\\
&= \sum_{x\in S}(\P(x)-\min(\P(x),\P_n(x)))\\&\quad +\sum_{x\in \Omega\backslash S}(\P(x)-\min(\P(x),\P_n(x)))\\
&\le \sum_{x\in S}(\P(x)-\min(\P(x),\P_n(x)))\\&\quad +\sum_{x\in \Omega\backslash S}\P(x)\\
&\le \sum_{x\in S}(\P(x)-\min(\P(x),\P_n(x)))+\epsilon/2
\end{align*}
Comme la somme est finie, $\miniop{}{\lim}{n\to +\infty}\sum_{x\in S}(\P(x)-\min(\P(x),\P_n(x)))=0$, donc pour $n$ suffisamment grand $\sum_{x\in S}(\P(x)-\min(\P(x),\P_n(x)))\le \epsilon/2$ et donc $d(\P,\P_n)\le\epsilon$.
\end{proof}
Lorsque l'une des conditions équivalentes est vérifiée, on dit que la suite de probabilités $\P_n$ converge en loi vers $\P$.

Si $X,(X_n)_{n\ge 1}$ sont des variables aléatoires sur un espace probabilisé régi par la probabilité $\P$, on dit que $X_n$ converge\footnote{En toute rigueur, il conviendrait de dire que, sous $\P$, $X_n$ converge en loi vers $X$. La précision n'est apportée que dans les rares cas où il y a ambiguïté sur la probabilité sous-jacente.}  en loi vers $X$ si la loi de $X_n$ (notée usuellement $\P_{X_n}$) converge vers la loi de $X$ ($\P_X$).

On peut également noter que

\begin{align*}
\P_{X_n}(A)-\P_{X}(A)&=\P(X_n\in A)-\P(X\in A)\\&=\E[\1_{\{X_n\in A\}}-\1_{\{X\in A\}}],
\end{align*}
donc
\begin{align*}
|\P_{X_n}(A)-\P_{X}(A)|&\le\E[|\1_{\{X_n\in A\}}-\1_{\{X\in A\}}|]\\&\le\E[\1_{\{X_n\ne X\}}]=\P(X_n\ne X),
\end{align*}
d'où $d(\P_{X_n},\P_X)\le \P(X_n\ne X)$.

Cette inégalité simple a des conséquences importantes. 
La plus immédiate est que la convergence des variables aléatoires entraîne la convergence des lois associées.
\begin{theo}
\label{cvpsentraineloi}
Si la suite de variables aléatoires $(X_n)_{n\ge 1}$ est à valeurs dans un ensemble discret et que $(X_n(x))_{n\ge 1}$ tend vers $X(x)$ pour tout $x\in\Omega$, alors  $X_n$ converge en loi vers $X$.
\end{theo}
\begin{proof} Posons $L=\max(n;X_n\ne X)$. La variable $L$ est une variable aléatoire à valeurs dans $\N$, donc $\lim_{n\to +\infty} \P(n\le L)=0$ d'après les propriétés classiques des fonctions de répartition. Comme $d(\P_{X_n},\P_X)\le \P(X_n\ne X)\le \P(n\le L)$,
 $X_n$ converge en loi vers $X$.
\end{proof}

\subsection{Tension}
Si $(\mu_n)_{n\ge 1})$ est une suite à valeurs dans $\mathcal{M}(\Omega)$, le procédé diagonal d'extraction permet d'en extraire une sous-suite  $(\mu_{n_k})_{k\ge 1}$ telle que pour tout $x\in \Omega$,  $(\mu_{n_k}(x))_{k\ge 1}$ converge.
Si l'on pose  alors, pour tout $x\in \Omega$,  $\mu_{\infty}(x)=\lim_{k\to +\infty} \mu_{n_k}(x)$, on a alors, pour toute partie finie $F$ de $\Omega$.

$$\sum_{x\in F} \mu_{\infty}(x)=\lim_{k\to +\infty} \sum_{x\in F} \mu_{n_k}(x)=\lim_{k\to +\infty} \mu_{n_k}(F)\le 1.$$
En passant à la borne supérieure, on obtient $\sum_{x\in \Omega} \mu_{\infty}(x)\le 1$.
Bien sûr, $\mu_{\infty}(x)\ge 0$ pour tout $x\in \Omega$, mais c'est insuffisant pour affirmer que $\mu_{\infty}\in\mathcal{M}(\Omega)$, puisqu'il faudrait encore que  $\sum_{x\in \Omega} \mu_{\infty}(x)=1$. De fait, il peut y avoir une perte de mesure, comme on peut le voir en prenant $\Omega=\N^*$ et $\mu_n=\delta_n$.

Cela amène à définir la notion de tension: on dit qu'une suite à valeurs dans $\mathcal{M}(\Omega)$ est tendue si pour tout $\epsilon>0$, il existe $F$ fini avec
$\mu_n(F)\ge 1-\epsilon$ pour tout $n\ge 1$.

On peut alors énoncer le théorème suivant:
\begin{theo}
De toute suite tendue de $\mathcal{M}(\Omega)$, on peut extraire une sous-suite qui converge en loi.
\end{theo}
\begin{proof}
Reprenons la suite  $(n_k)_{k\ge 1}$ considérée plus haut. Cette fois-ci, pour $\epsilon>0$, on peut trouver $F$ fini tel que $\mu_n(F)\ge 1-\epsilon$ pour tout $n\ge 1$.
On a alors avec les notations précédentes
$$\sum_{x\in F} \mu_{\infty}(x)=\lim_{k\to +\infty} \sum_{x\in F} \mu_{n_k}(x)=\lim_{k\to +\infty} \mu_{n_k}(F)\ge 1-\epsilon,$$
donc $\sum_{x\in \Omega} \mu_{\infty}(x)\ge \sum_{x\in F} \mu_{\infty}(x)\ge 1-\epsilon$.
Comme $\epsilon$ peut être pris arbitrairement petit, $\mu_{\infty}$ est, cette fois, une probabilité.
\end{proof}
Une probabilité limite d'une sous-suite extraite est appelée loi limite de la suite. Comme dans le cas des suites réelles à valeurs dans un compact, on peut énoncer:
\begin{theo}
Si une suite tendue a une unique loi limite, alors elle est convergente.
\end{theo} 
\begin{proof}
Soit $\mu_{\infty}$ l'unique valeur d'adhérence.
Soit $x\in \Omega$. La suite $(\mu_n(x))_{n\ge 1}$ est à valeurs dans $\mathopen[0,1\mathclose]$ qui est compact.
Soit $(n_k)_{k\ge 1}$ une suite telle que $\mu_{n_k}(x)\to \alpha$.
Comme $(\mu_n)_{n\ge 1}$ est tendue, $(\mu_{n_k})_{k\ge 1}$ l'est aussi. On peut donc en extraire une sous-suite  $(\mu_{n_{\phi(k)}})_{k\ge 1}$ qui converge en loi.
Mais cette suite est extraite de $(\mu_n)$ qui n'a qu'une seule valeur d'adhérence, donc $\lim_{k\to +\infty}\mu_{n_{\phi(k)}}(x)=\mu_{\infty}(x)$, ce qui entraîne que $\alpha=\mu_{\infty}(x)$. Ainsi, $\mu_{\infty}(x)$ est l'unique valeur d'adhérence de la suite $(\mu_n(x))_{n\ge 1}$ qui est à valeurs dans un compact, donc  $\mu_n(x)\to \mu_{\infty}(x)$. Comme $\mu_{\infty}\in\mathcal{M}(\Omega)$, $(\mu_n)_{n\ge 1}$ converge en loi vers $\mu_{\infty}$.
\end{proof}

Les notions de convergence en loi, de tension, se généralisent à des lois ou des variables aléatoires sur $\R$ ou $\Rd$. Pour plus d'informations, on pourra par exemple se référer à~\cite{gk}.

\subsection{Lois jointes}

Si $(X_n)_{n\in D}$ est une famille infinie de variables aléatoires,
on appelle lois jointes des $(X_n)_{n\in D}$ les lois des vecteurs aléatoires
$(X_i)_{i\in I}$, où $I$ décrit l'ensemble des parties finies de $D$.
On admettra que la loi d'un vecteur aléatoire $(X_i)_{i\in I}$ est pleinement déterminée par la fonction de répartition inverse:
$$(t_i)_{i\in I}\mapsto\P(\forall i\in I\quad X_i\ge t_i).$$
C'est un fait général dont la démonstration est immédiate dans le cas de la théorie de la mesure. On peut toutefois noter que dans le cas des variables discrètes, cela peut se démontrer par récurrence sur $|I|$ sans difficulté particulière. 
\section{Une preuve pleinement probabiliste}
\label{lasectiondesprobas}
On a besoin d'introduire une notation et quelques lemmes: d'abord, on note $\nu_p(n)$ l'exposant de $p$  dans la décomposition de $n$ en produit de facteurs premiers (c'est la valuation $p$-adique de $n$).

\begin{lem}
\label{reconnais_lois_un}
La loi d'une variable aléatoire $X$ à valeurs dans $\N^*$ est caractérisée par les lois jointes des variables $(\nu_{p_k}(X))_{k\ge 1}$.
\end{lem}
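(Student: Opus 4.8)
Le plan est de montrer que la donnée des lois jointes des $(\nu_{p_k}(X))_{k\ge 1}$ détermine chaque probabilité $\P(X=n)$ pour $n\in\N^*$, ce qui suffit à caractériser la loi de $X$. Le point de départ est le théorème fondamental de l'arithmétique: l'application $n\mapsto (\nu_{p_k}(n))_{k\ge 1}$ est une bijection de $\N^*$ sur l'ensemble des suites d'entiers positifs nulles à partir d'un certain rang. Ainsi, en notant $a_k=\nu_{p_k}(n)$ pour tout $k\ge 1$ (suite nulle pour $k$ assez grand), on peut écrire exactement
$$\{X=n\}=\bigcap_{k\ge 1}\{\nu_{p_k}(X)=a_k\}.$$

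Je poserais alors, pour $N\ge 1$, $B_N=\bigcap_{k=1}^{N}\{\nu_{p_k}(X)=a_k\}$. La suite $(B_N)_{N\ge 1}$ est décroissante et son intersection vaut précisément $\{X=n\}$: tout $\omega$ appartenant à tous les $B_N$ vérifie $\nu_{p_k}(X(\omega))=a_k$ pour tout $k$, donc $X(\omega)=n$ par unicité de la décomposition en facteurs premiers. Or, par définition même des lois jointes, la probabilité $\P(B_N)=\P(\nu_{p_1}(X)=a_1,\dots,\nu_{p_N}(X)=a_N)$ ne dépend que de la loi du vecteur $(\nu_{p_1}(X),\dots,\nu_{p_N}(X))$, c'est-à-dire de la loi jointe associée à la partie finie $\{1,\dots,N\}$.

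Il ne resterait plus qu'à invoquer le théorème de continuité séquentielle décroissante (déjà employé plus haut pour le produit eulérien) afin d'obtenir $\P(X=n)=\lim_{N\to+\infty}\P(B_N)$. Chaque $\P(B_N)$ étant fonction des seules lois jointes, leur limite l'est aussi; ainsi $\P(X=n)$ est déterminée pour tout $n$, et deux variables ayant les mêmes lois jointes de valuations ont nécessairement la même loi.

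Le seul endroit véritablement délicat — et le seul où l'on quitte la combinatoire élémentaire — est ce passage à la limite en $N$. On pourrait être tenté de croire que $\{X=n\}$ coïncide avec l'un des $B_N$, mais c'est faux: $B_N$ contient par exemple tous les événements $\{X=n\,p_j\}$ pour $j>N$, puisque multiplier par un facteur premier d'indice $>N$ ne modifie aucune des $N$ premières valuations. C'est précisément cette queue infinie de contraintes $\nu_{p_k}(X)=0$ (pour $k$ grand) qui interdit un argument purement fini et impose le recours à la continuité décroissante.
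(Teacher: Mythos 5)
Votre preuve est correcte, mais elle n'emprunte pas tout à fait le même chemin que celle du texte. L'article pose $\psi_n(x)=\prod_{i=1}^n p_i^{\nu_{p_i}(x)}$, observe que $\psi_n(X)$ est une fonction du vecteur $(\nu_{p_1}(X),\dots,\nu_{p_n}(X))$ — donc de loi déterminée par les lois jointes — et conclut en invoquant le théorème~\ref{cvpsentraineloi} : la convergence ponctuelle $\psi_n(X)\to X$ entraîne la convergence en loi, ce qui identifie la loi de $X$ comme limite de lois connues. Vous court-circuitez cette machinerie : vous calculez directement $\P(X=n)$ comme limite décroissante des $\P(B_N)$, où $B_N=\bigcap_{k=1}^N\{\nu_{p_k}(X)=a_k\}$, en n'utilisant que la continuité séquentielle décroissante. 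Les deux arguments reviennent au même calcul — pour $N$ assez grand, $B_N$ n'est autre que $\{\psi_N(X)=n\}$ — mais le vôtre est plus autonome (il ne dépend pas du théorème~\ref{cvpsentraineloi} ni de la distance en variation totale), tandis que celui de l'article s'inscrit dans la logique du texte, qui veut précisément illustrer l'usage de la convergence en loi. Votre remarque finale sur l'impossibilité d'un argument purement fini (aucun $B_N$ ne coïncide avec $\{X=n\}$, à cause des facteurs premiers d'indice $>N$) est juste et bienvenue : c'est exactement le point que le passage à la limite, sous une forme ou une autre, doit traiter.
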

\begin{proof}
On a $$X=\lim_{n\to +\infty} \psi_n(X),\text{ avec }\psi_n(x)=\prod_{i=1}^n p_i^{\nu_{p_i}(x)}.$$
La suite $(\psi_n(X))_{n\ge 1}$ converge ponctuellement vers $X$, donc converge en loi vers $X$, d'après le théorème~\ref{cvpsentraineloi}.
Si les lois jointes des variables $(\nu_{p_k}(X))_{k\ge 1}$ sont connues, les lois des $(\psi_n(X))_{n\ge 1}$ sont connues, et donc la loi de $X$.
\end{proof}

\begin{lem}
La loi d'une variable aléatoire à valeurs dans $\N^*$ est caractérisée par les valeurs de $\P(n|X)$, où $n$ décrit $\N^*$.
\end{lem}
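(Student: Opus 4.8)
Le plan est de faire la jonction entre les deux énoncés qui précèdent, sans aucun calcul nouveau. D'après le lemme~\ref{reconnais_lois_un}, la loi de $X$ est caractérisée par les lois jointes des variables $(\nu_{p_k}(X))_{k\ge 1}$. Or on a admis à la sous-section précédente qu'une telle loi jointe, c'est-à-dire la loi d'un vecteur $(\nu_{p_k}(X))_{k\in I}$ pour $I$ fini, est entièrement déterminée par sa fonction de répartition inverse
$$(t_k)_{k\in I}\mapsto\P(\forall k\in I\quad \nu_{p_k}(X)\ge t_k).$$
Il me suffit donc d'exprimer chacune de ces quantités à l'aide des seuls $\P(n|X)$.

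Le point central est une identité ensembliste. Comme $\nu_{p_k}(X)$ est un entier naturel, on peut se restreindre à des seuils $t_k$ entiers positifs ou nuls : pour $t_k\le 0$ la contrainte est automatiquement vérifiée. Pour de tels seuils, la condition $\nu_{p_k}(X)\ge t_k$ équivaut à $p_k^{t_k}|X$, et comme les entiers $(p_k^{t_k})_{k\in I}$ sont deux à deux premiers entre eux, je peux écrire
$$\bigcap_{k\in I}\{\nu_{p_k}(X)\ge t_k\}=\Bigl\{\prod_{k\in I}p_k^{t_k}\ \Big|\ X\Bigr\}.$$
En posant $n=\prod_{k\in I}p_k^{t_k}$, la fonction de répartition inverse évaluée en $(t_k)$ vaut donc exactement $\P(n|X)$.

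Pour conclure, je remarquerais qu'en vertu de l'unicité de la décomposition en facteurs premiers, lorsque $I$ parcourt les parties finies de $\N^*$ et $(t_k)$ les familles d'entiers positifs ou nuls, l'entier $n=\prod_{k\in I}p_k^{t_k}$ décrit exactement $\N^*$. Ainsi la donnée de tous les $\P(n|X)$ est équivalente à celle de toutes les fonctions de répartition inverses des lois jointes, ce qui détermine ces lois jointes, puis la loi de $X$ par le lemme~\ref{reconnais_lois_un}. Il n'y a à vrai dire pas d'obstacle sérieux dans cet argument : toute la substance a été placée en amont, dans le lemme~\ref{reconnais_lois_un} et dans le fait admis sur les lois jointes. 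Le seul point qui demande un soin réel est l'identité ensembliste ci-dessus, dont la validité repose entièrement sur le fait que des puissances de nombres premiers distincts sont premières entre elles.
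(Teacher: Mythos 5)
Votre preuve est correcte et suit essentiellement le même chemin que celle de l'article : réduction aux lois jointes des $(\nu_{p_k}(X))_{k\ge 1}$ via le lemme~\ref{reconnais_lois_un}, puis identification de la fonction de répartition inverse $\P(\forall k\in I,\ \nu_{p_k}(X)\ge t_k)$ avec $\P\bigl(\prod_{k\in I}p_k^{t_k}\,|\,X\bigr)$. Vous explicitez simplement un peu plus les détails (seuils entiers, parcours de $\N^*$ par les produits de puissances de premiers) que l'article laisse implicites.
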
 
\begin{proof}
D'après le lemme précédent, il suffit de connaître les loi jointes des variables $(\nu_{p_k}(X))_{k\ge 1}$. Mais pour cela, il suffit d'avoir les fonctions de répartition inverse. Or,
$$\P(\nu_{p_1}(X)\ge a_1,\dots,\nu_{p_r}(X)\ge a_r)=\P(\prod_{i=1}^r p_i^{a_i}| X),$$
d'où le résultat.
\end{proof}

Ainsi, si $X$ et $Y$ sont des variables indépendantes suivant respectivement les lois Zêta de paramètre $s$ et $t$, alors leur plus grand commun diviseur, noté $X\wedge Y$, vérifie $\P(n|X\wedge Y)=\P(n|X,n|Y)=\P(n|X)\P(n|Y)=\frac1{n^{s+t}}$, donc $X\wedge Y$ suit la loi Zêta de paramètre $s+t$.

Dans le cas où  $X$ suit la loi Zêta de paramètre $s$, le calcul de fonction de répartition effectué dans la preuve du lemme nous donne 
$$\P(\nu_{p_1}(X)\ge a_1,\dots,\nu_{p_r}(X)\ge a_r)=\prod_{i=1}^r  p_i^{-a_i s}.$$
La fonction de répartition inverse caractérisant la loi des vecteurs, on en déduit que les variables $\nu_p(X)$ sont indépendantes et que $1+\nu_p(X)$ suit la loi géométrique de paramètre $1-p^{-s}$ (que l'on note $\mathcal{G}(1-p^{-s})$). 

\begin{theo}
Soit $X,(X_n)_{n\ge 1}$ des variables aléatoires à valeurs dans $\N$.
On suppose que
\begin{itemize}
\item $(X_n)$ est tendue.
\item Pour tout $N\ge 1$, $\P(N|X_n)\to \P(N|X)$
\end{itemize}
Alors $(X_n)_{n\ge 1}$ converge en loi vers $X$.
\end{theo}

\begin{proof}
Comme la famille est tendue, il suffit d'identifier les lois limites:
si $X_{n_k}$ tend en loi vers $Y$, $\P(X_{n_k}\in n\N^*)$ tend vers $\P(Y\in n\N^*)$; autrement dit $\P(n|X_{n_k})$ tend vers $\P(n|Y)$. Donc  $\P(n|Y)=\P(n|X)$ et $X$ et $Y$ ont même loi.
\end{proof}

On peut maintenant énoncer et démontrer un résultat probabiliste.

\begin{theo}
\label{dansz}
Soient $X_n$, $Y_n$ des variables aléatoires suivant la  loi uniforme sur $\{1,\dots,n\}$.
On note $Z_n=X_n\wedge Y_n$ et $W_n=r(X_n)$, où $r(n)$ est le plus grand entier $a$ tel que $a^2$ divise $n$.

Alors $W_n$ et $Z_n$ convergent en loi vers la loi Zêta de paramètre $2$.
\end{theo}
\begin{proof}
On a
$$\P(N|Z_n)=\P(N|X_n,N|Y_n)=\P(N|X_n)^2=\left(\frac{\floor{n/N}}{n}\right)^2$$
et 
$$\P(N|W_n)=\P(N^2|X_n)=\frac{\floor{n/N^2}}{n}.$$
Les deux quantités convergent évidemment vers $\frac1{N^2}$, qui est la probabilité qu'une variable suivant la loi Zêta de paramètre 2 soit divisible par $N$.

Reste à montrer la tension. Quels que soient les entiers naturels $n$ et $a$, on a
$$\P(Z_n\ge a)=\sum_{i=a}^{+\infty}\P(Z_n=i)\le \sum_{i=a}^{+\infty}\P(i| Z_n)\le \sum_{i=a}^{+\infty}\frac1{i^2},$$
et de même $\P(W_n\ge a)\le \sum_{i=a}^{+\infty}\frac1{i^2}$.
Soit alors $\epsilon>0$. Si je prends $a$ tel que $\sum_{i=a}^{+\infty}\frac1{i^2}<\epsilon$, l'ensemble $F=\{1,\dots,a\}$ vérifie $\P(Z_n\in F)\ge 1-\epsilon$ et 
$\P(W_n\in F)\ge 1-\epsilon$, ce qui montre que l'hypothèse de tension est bien satisfaite.
 
\end{proof}

Ce résultat se généralise aisément:
\begin{theo}
Soient $X^1_n,\dots,X^m_n$ des variables aléatoires indépendantes suivant la  loi uniforme sur $\{1,\dots,n\}$.
On note $Z_n=X^1_n\wedge \dots \wedge X^m_n$ et $W_n=r_m(X^1_n)$, où $r_m(n)$ est le plus grand entier $a$ tel que $a^m$ divise $n$.

Alors $W_n$ et $Z_n$ convergent en loi vers la même loi Zêta de paramètre $m$.
\end{theo}
La preuve est laissée au lecteur.
On a ainsi retrouvé un ancien résultat, que son découvreur, Ernest Cesàro~\cite{cesaro1,cesaro2}, décrivait en ces termes (voir~\cite{cesaro2}) : \og La probabilité que la $m$-ième racine de la plus haute puissance  $m$-ième, qui divise un nombre entier pris au hasard, appartienne à un certain système de nombres, ne diffère pas de la probabilité que le plus grand commun diviseur de $m$ entiers, pris au hasard, appartienne au même système.\fg

\section{Quelques généralisations}

Dans cette section, on propose quelques généralisations/extensions des résultats précédents, toujours en privilégiant, lorsque cela est possible, l'approche probabiliste. On supposera maintenant que les théorèmes de convergence dominée et de convergence monotone sont bien connus, particulièrement lorsqu'on les applique à des variables aléatoires.

\subsection{Développement eulérien}
Une fonction $\phi:\N^*\to\C$ est dit multiplicative si $\phi(pq)=\phi(p)\phi(q)$ est vérifiée dès que $p$ et $q$ sont premiers entre eux.
Si c'est vrai pour tous les couples $(p,q)\in(\N^*)^2$, alors la fonction est dite complètement multiplicative.

\begin{theo}
Soit $\phi$ une fonction multiplicative positive ou bornée.
On suppose que $X$ suit la loi  Zêta de paramètre $s$
et que $(N_i)_{i\ge 1}$ est une suite de variables aléatoires indépendantes telles que $1+N_i\sim\mathcal{G}(1-p_i^{-s})$.
Alors
\begin{align}
\label{decomp_proba}
\E[\phi(X)]&=\prod_{i=1}^{+\infty}\E(p_i^{N_i})\end{align}
et
\begin{align}
\label{decomp_premier}
\sum_{n=1}^{+\infty} \frac{\phi(n)}{n^s}=\prod_{i=1}^{+\infty}\left(\sum_{j=0}^{+\infty}p_i^{-sj}\phi(p_i^j)\right).
\end{align}
En particulier, si $\phi$ est complètement multiplicative
\begin{align}
\label{decomp_complete}
\sum_{n=1}^{+\infty} \frac{\phi(n)}{n^s}&=\prod_{i=1}^{+\infty} (1-p_i^{-s}\phi(p_i))^{-1}
\end{align}

\end{theo}

\begin{proof}
Soient $(N_i)_{n\ge 1}$ une suite de variables aléatoires indépendantes telles que $1+N_i\sim\mathcal{G}(1-p_i^{-s})$. Supposons $\phi$ multiplicative et bornée.
D'après le lemme~\ref{reconnais_lois_un}, $\prod_{i=1}^{+\infty} p_i^{N_i}$ a même loi que $X$, donc
$\E[\phi(X)]=\E[\phi(\prod_{i=1}^{+\infty} p_i^{N_i})]$; mais $\phi$ est une fonction multiplicative, donc
$$\E[\phi(X)]=\E[\prod_{i=1}^{+\infty} \phi(p_i^{N_i})]=\prod_{i=1}^{+\infty}\E(\phi(p_i^{N_i})).$$
Comme $\phi$ est bornée, l'égalité entre le deuxième et le troisième membre  est une conséquence du théorème de convergence dominée et de l'indépendance des
 variables $(\phi(p_i^{N_i})_{i\ge 1}$.
Ainsi
$$\zeta(s)^{-1}\sum_{n=1}^{+\infty}\frac{\phi(n)}{n^s}=\prod_{i=1}^{+\infty}\left(\sum_{j=0}^{+\infty}(1-p_i^{-s})p_i^{-sj}\phi(p_i^j)\right).$$
En simplifiant par $\zeta(s)^{-1}=\prod_{i=1}^{+\infty}(1-p_i^{-s})$, on obtient le résultat voulu.

Le cas où $\phi$ est positive non bornée se fait en l'approchant par une suite croissante de fonctions multiplicatives bornées. Ceci est laissé en exercice au lecteur.
\end{proof}

\subsection{Nombre moyen de décompositions en sommes de carrés}
On note $\mathbb{Z}[i]$ l'ensemble des nombres de la forme $a+ib$, avec $a$ et $b$ dans $\Z$. C'est l'anneau des entiers de Gauss. Pour $z\in \mathbb{Z}[i]$, on pose $N(z)=z\overline{z}$.
On a bien sûr $N(zz')=N(z)N(z')$.
Ainsi, un inversible de $\mathbb{Z}[i]$ est tel que $N(z)$ est un inversible de $\Z$: ce ne peut être que $1,i,-1,-i$. Il est alors aisé de voir que tout élément de $\Z[i]$ non nul s'écrit d'une manière unique sous la forme $i^k z'$, avec 
$k\in\{0,1,2,3\}$, $\Re z'\ge 0$ et $\Im z'>0$: $z'$ est le représentant privilégié de la classe de $z$ lorsque l'on quotiente le semi-groupe $(\Z[i],\times)$ par ses éléments inversibles. On a bien sûr $N(z)=N(z')$, de sorte que l'application $N$ passe au quotient.
Ainsi, si on note $Z'$ l'ensemble des classes non nulles,
on peut définir une application $S'$ de $Z'$ dans $\N$ par
$$S'(n)=|\{z\in Z'; N(z)=n\},$$
et l'on a également $S'(n)=|\{(a,b)\in\N\times\N^*; a^2+b^2=n\}|$.

La fonction $S'$ peut se calculer explicitement, en utilisant un certain nombre de résultats bien connus de l'anneau $\Z[i]$, qui sont par exemple décrits dans Perrin~\cite{perrin}.
On sait en particulier que
\begin{itemize}
\item $\Z[i]$ est un anneau factoriel.
\item Les irréductibles de $\Z[i]$ sont 
\begin{itemize}
\item les nombres premiers congrus à $3$ modulo 4; ces nombres ne peuvent s'écrire sous forme de somme de deux carrés.
\item les nombres de la forme $a+ib$ tels que $a^2+b^2$ est un nombre premier; tous les entiers naturels premiers qui ne sont pas congrus à $3$ modulo 4 peuvent s'écrire sous la forme d'une somme de deux carrés.
\end{itemize}
\end{itemize}
Ces résultats étant rappelés, on commence par établir un lemme très utile:
\begin{lem}
Si $a$ et $b$ sont des entiers naturels premiers entre eux, tout élément de $Z'$ de norme $ab$ se factorise de manière unique sous la forme du produit de deux éléments de $Z'$ de normes respectives $a$ et $b$.
\end{lem}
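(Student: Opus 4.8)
Le plan est d'exploiter que $\Z[i]$ est factoriel, de sorte que le quotient $Z'$ est un monoïde commutatif libre engendré par les classes d'irréductibles : tout élément $z\in Z'$ de norme $ab$ s'écrit donc de manière unique $z=\prod_j \pi_j^{e_j}$ comme produit de classes irréductibles, avec $N(z)=\prod_j N(\pi_j)^{e_j}=ab$. L'idée est de répartir ces facteurs irréductibles entre un facteur de norme $a$ et un facteur de norme $b$, la coprimalité de $a$ et $b$ servant de ciseau.

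L'observation cruciale, qui est le cœur de la preuve, est que la norme de tout irréductible est une puissance d'un unique nombre premier rationnel. En effet, d'après la classification rappelée plus haut, un irréductible est soit un premier $p\equiv 3\pmod 4$, auquel cas $N(\pi)=p^2$, soit un élément $a+ib$ avec $a^2+b^2=p$ premier (y compris $p=2$), auquel cas $N(\pi)=p$. Dans les deux cas, $N(\pi)$ ne fait intervenir qu'un seul premier $p$, que j'appellerai le premier sous-jacent à $\pi$.

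Pour l'existence, je partitionnerais les facteurs $\pi_j$ selon leur premier sous-jacent. Comme $a$ et $b$ sont premiers entre eux, chacun de ces premiers divise exactement l'un des deux entiers $a$ ou $b$. En posant $u$ le produit des $\pi_j^{e_j}$ dont le premier sous-jacent divise $a$, et $v$ le produit des autres, on a aussitôt $z=uv$. Un calcul de valuation $p$-adique montre alors que $N(u)=a$ et $N(v)=b$ : pour $p\mid a$ on a $v_p(N(v))=0$, donc $v_p(N(u))=v_p(ab)=v_p(a)$, et $N(u)$ ne comporte par construction aucun autre facteur premier.

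Pour l'unicité, je m'appuierais de nouveau sur l'unicité de la factorisation dans $Z'$. Si $z=uv=u'v'$ avec $N(u)=N(u')=a$ et $N(v)=N(v')=b$, alors le premier sous-jacent à tout facteur irréductible de $u$ divise $N(u)=a$, et de même les facteurs de $v$ sont au-dessus de premiers divisant $b$. Les deux ensembles de premiers étant disjoints, la répartition des facteurs irréductibles de $z$ entre les deux parts est entièrement forcée, d'où $u=u'$ et $v=v'$. Le principal point délicat reste l'observation sur les normes des irréductibles ; une fois celle-ci acquise, la coprimalité fait tout le travail de séparation.
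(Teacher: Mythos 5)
Votre preuve est correcte et suit essentiellement la même démarche que celle de l'article : factorisation en classes irréductibles dans $Z'$, répartition des facteurs selon que leur norme se rattache à $a$ ou à $b$ (la coprimalité interdisant toute ambiguïté), puis unicité obtenue en constatant que cette répartition est forcée. Vous rendez seulement explicite un point que l'article laisse implicite, à savoir que la norme d'un irréductible est une puissance d'un unique nombre premier rationnel, ce qui est bien l'argument qui justifie l'étape \og la norme d'un facteur divise $a$ ou $b$\fg.
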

\begin{proof}
Soit $z'\in Z'$ avec $N(z')=ab$.
$z'$ se factorise dans $Z'$ comme produits de classes d'éléments irréductibles de $\Z[i]$. La norme d'un facteur divise $N(z')=ab$, donc soit $a$, soit $b$ puisque $a$ et $b$ sont premiers entre eux.
Soit $x$ le produit des facteurs (pris avec leur multiplicité) dont la norme divise $a$, $y$ le produit des facteurs dont la norme divise $b$. On a $z'=xy$. $N(x)$ divise $N(z')=ab$ et est premier avec $b$ donc $N(x)$ divise $a$. De même $N(y)$ divise $b$. Comme leur produit fait $ab$, on a $N(x)=a$ et $N(y)=b$.
Maintenant si $z=x'y'$ avec $N(x)=a$ et $N(x)=b$, un facteur irréductible de  $x'$ est un facteur irréductible de $z$ dont la norme divise $a$: c'est un facteur irréductible de $x$. La valuation de $q$ dans $x'$ ne peut être plus grande que dans $z$, puisque $x'$ divise $z$. Or, par définition de $x$ la valuation de $q$ dans $x$ est égale à la valuation de $q$ dans $z$, donc $q$ a une valuation plus petite dans $x'$ que dans $x$, ce pour tout $q$, donc $x'$ divise $x$.
Comme $x'$ et $x$ ont la même norme, ils sont égaux.
De même, $y=y'$.
\end{proof}
On en déduit directement que la fonction $S'$ est une fonction multiplicative.
Calculons plus précisément cette fonction.

Si $z'\in Z'$ est tel que $N(z')=2^e$, un facteur irréductible de $z'$ a une norme qui est une puissance de deux. Vu la caractérisation des irréductibles rappelée plus haut, ce facteur ne peut être que la classe de $1+i$. Finalement, la
classe de $(1+i)^{e}$ est la seule classe de norme $2^e$.

Si $p$ est un nombre premier congru à $1$ modulo 4, $p$ n'est pas premier dans  $\Z[i]$; il se ramifie sous la forme $p=(a+ib)(a-ib)$, $a+ib$ et $a-ib$ sont premiers dans $\Z[i]$, non équivalents, de norme $p$.
Si $N(z)=p^e$, un facteur premier de $z$ a une norme qui divise $p^e$, donc $p$: les facteurs de $z$ ne peuvent être que les classes de  $a+ib$ et $a-ib$.
Ainsi $z$ s'écrit comme une puissance $k$-ième de la classe de $a+ib$ et
une puissance $\ell$-ième de la classe de $a-ib$. Comme $p^e=N(z)=N(a+ib)^{k}N(a-ib)^{\ell}=p^{k+\ell}$, cela nous donne exactement $e+1$ solutions.

Si $p$ est un nombre premier congru à $3$ modulo 4, $p$ est premier dans 
$\Z[i]$. Si $N(z)=p^e$, un facteur premier de $z$ a une norme qui divise $p^e$, donc $p$: ce ne peut être que $p$, donc $z$ s'écrit $z=p^{k}$, et on a $N(z)=p^{2k}$. Il n'y a donc de solution que si $e$ est pair, et dans ce cas, elle est unique.

Ainsi, on a démontré que la fonction $S'$ est une fonction multiplicative, que l'on peut calculer explicitement avec pour $p$ premier:
$$S'(p^e)=\begin{cases}1&\text{si }p=2\\
e+1&\text{si $p$ est congru à 1 modulo 4}\\ 
\1_{\{e\text{ pair}\}}&\text{si $p$ est congru à 3 modulo 4}
\end{cases}$$

\begin{theo}
Soit $X$ suivant la loi Zêta de paramètre $s>1$. On a $\E[S'(X)]=\beta(s)$, où $\beta$ est la fonction bêta de Dirichlet:
$$\beta(s)=\sum_{n=0}^{+\infty}\frac{(-1)^n}{(2n+1)^s}.$$
\end{theo}
\begin{proof}
Soient $(N_i)_{i\ge 1}$ une suite de variables aléatoires indépendantes telles que $1+N_i\sim\mathcal{G}(1-p_i^{-s})$.
On a $\E[S'(X)]=\prod_{i=1}^{+\infty} \E(S'(p_i^{N_i}))]$
Ainsi, si $p_i$ est congru à 1 modulo 4, on a $\E(S'(p_i^{N_i}))=\E[N_i+1]=\frac1{1-p_i^{-s}}$ et si $p_i$ est congru à 3 modulo 4, 
$\E(p_i^{N_i})=\sum_{j=0}^{+\infty} (1-p_i^{-s})p_i^{-2js}=\frac{1-p_i^{-s}}{1-p_i^{-2s}}=\frac1{1+p_i^{-s}}$.
Ainsi, dans tous les cas $\E(p_i^{N_i})=(1-p_i^{-s}\chi_4(p_i))^{-1}$ où $\chi_4$ est
défini par $\chi_4(2n)=0$ et  $\chi_4(2n+1)=(-1)^n$.
Finalement, comme $\chi_4$ est complètement multiplicative, on a
\begin{align*}
\E[S'(X)]&=\prod_{i=1}^{+\infty}\E(S'(p_i^{N_i}))\quad\text{ avec~\eqref{decomp_proba}}\\
&=\prod_{i=1}^{+\infty}(1-p_i^{-s}\chi_4(p_i))^{-1}\\
&=\sum_{n=1}^{+\infty}\frac{\chi_4(n)}{n^s}\quad\text{ avec~\eqref{decomp_complete}}\\
&=\sum_{n=0}^{+\infty}\frac{(-1)^n}{(2n+1)^s}=\beta(s)
\end{align*}
\end{proof}

\subsection{Application aux entiers de Gauss}
Comme on l'a fait pour les nombres entiers, on veut maintenant mesurer la propension qu'ont les entiers de Gauss à être premiers entre eux.\\

Soit $s>1$. En identifiant $Z'$ avec $\{a+ib; (a,b)\in\N\times\N^*,(a,b)\ne (0,0)\}$, on voit que $Z'$ s'identifie à une partie dénombrable discrète de $\R^2$.
En regroupant les termes suivant la valeur de $N(z)$, 
 on voit que la quantité $\sum_{z\in Z'}N(z)^{-s}$ peut se réécrire
$$\sum_{z\in Z'}N(z)^{-s}=\sum_{n=1}^{+\infty} \frac{S'(n)}{n^s},$$
où $S'$ est la fonction étudiée à la sous-section précédente.
On en déduit que $\sum_{z\in Z'}N(z)^{-s}=\zeta(s)\beta(s)$.
En particulier, la série de terme général  $(N(z)^{-s})_{z\in Z'}$ converge, et on peut alors définir une loi $\zeta'_s$ sur $Z'$  par
$$\zeta'_s(x)=(\sum_{z\in Z'}N(z)^{-s})^{-1} N(x)^{-s}$$
Il est alors aisé de voir que pour tout $z\in Z$ $\zeta'_s(zZ)=\frac1{N(z)^s}$.

Comme $\Z$, $\Z[i]$ est un anneau factoriel, et on a unicité de la décomposition des éléments de $Z'$ comme produit de classes premières. 

En procédant comme dans la section~\ref{lasectiondesprobas}, on montre alors
\begin{theo}
Soit $X,(X_n)_{n\ge 1}$ des variables aléatoires à valeurs dans $Z'$.
On suppose que
\begin{itemize}
\item $(X_n)$ est tendue.
\item Pour tout $z\in Z'$, $\P(z|X_n)\to \P(z|X)$
\end{itemize}
Alors $(X_n)_{n\ge 1}$ converge en loi vers $X$.
\end{theo}
\begin{proof}
Comme $\Z[i]$ est discret et dénombrable, les arguments de la section~\ref{lasectiondesprobas} se déroulent sans grande modification. Les détails de la preuve sont laissés au lecteur. 
\end{proof}
On peut maintenant énoncer un analogue dans $\Z[i]$ du théorème~\ref{dansz} .
\begin{theo}
Soient $X_n$, $Y_n$ des variables aléatoires indépendantes suivant la  loi uniforme sur $\{z\in Z'; N(z)\le n^2\}$.
On note $Z_n=X_n\wedge Y_n$. Alors $Z_n$ converge en loi vers $\zeta'_2$
En particulier
$$\lim_{n\to +\infty}\P(X_n\wedge Y_n=1)=(\sum_{z\in Z'}N(z)^{-2})^{-1}=\frac1{\zeta(2)\beta(2)}.$$
\end{theo}
\begin{proof}
Pour $a\in\C$ et $r\ge 0$, on note $B(a,r)=\{z\in\C\quad |a-z|\le r\}$.
En identifiant un point de $Z'$ avec les $4$ points de $\Z[i]$ qu'il contient, on a
\begin{align*}\P(z|Z_n)=\P(z|X_n)^2&=\left(\frac{|(zZ')\cap B(0,n)|}{|Z'\cap B(0,n)|}\right)^2\\
&=\left(\frac{|Z'\cap B(0,n/|z|)|}{|Z'\cap B(0,n)|}\right)^2\\
&\sim \left(\frac{\pi (n/|z|)^2}{\pi n^2}\right)^2\sim\frac1{N(z)^2}
\end{align*}
Ainsi la loi $\zeta'_2$ est la seule loi limite possible.
Montrons donc la tension.
On peut trouver $a,b>0$ tels que  $ar^2\le |Z'\cap B(0,r)|$ pour $r\ge 1$ et  $|Z'\cap B(0,r)|\le br^2$ pour tout $r>0$.
On en déduit
\begin{align*}\P(Z_n=z)\le \P(z|Z_n)\le \frac{b}{a}\frac1{N(z)^2}
\end{align*}
Comme la série des $\frac1{N(z)^2}$ converge, cela donne comme précédemment la tension de $(Z_n)$.
\end{proof}

On a ainsi obtenu une preuve assez élémentaire d'un résultat obtenu précédemment par Collins et Johnson~\cite{MR1034735} à l'aide de la théorie générale des corps de nombres.

\def\refname{Références}
\bibliographystyle{plain}

\end{document}